\theoremstyle{definition}
 \newtheorem{definition}{Definition}[section]
\theoremstyle{plain}
\theoremstyle{plain}
 \newtheorem{theorem}[definition]{Theorem}
\theoremstyle{definition}
\theoremstyle{plain}
 \newtheorem{lemma}[definition]{Lemma}
\theoremstyle{plain}
\theoremstyle{remark}
 \newtheorem{remark}[definition]{Remark}
\theoremstyle{definition}
\theoremstyle{plain}
\newcommand{\Hom}{\mathrm{Hom}}
\newcommand{\Ob}{\mathrm{Ob}}
\newcommand{\m}{\mathfrak{m}}
\renewcommand{\k}{\Bbbk}
\newcommand{\Cb}{\mathscr{C}}
\title[liftings of Persistence Modules]{Liftings of point-wise finite dimensional persistence modules over local commutative Artinian rings} 
\thanks{}
\author[V\'elez-Marulanda]{Jos\'e A. V\'elez-Marulanda}
\address{Department of Applied Mathematics \& Physics, Valdosta State University, Valdosta, GA,  United States of America}
\email{javelezmarulanda@valdosta.edu}
\address{Facultad de Matem\'aticas e Ingenier\'{\i}as, Fundaci\'on Universitaria Konrad Lorenz, Bogot\'a D.C.,  Colombia}
\email{josea.velezm@konradlorenz.edu.co}
\keywords{Distances for triangulated categories \and derived category of persistence modules }
\subjclass[2010]{55N31 \and 18E10 \and 18G80}
\begin{document}
\maketitle

\begin{abstract}
Let $\k$ be a field and let $V: \Cb \to \k\textup{-Mod}$ be a pointwise finite dimensional persistence module, where $\Cb$ is a small category. Assume that for all local Artinian $\k$-algebras  $R$ with residue field isomorphic to $\k$, there is a generalized persistence module $M: \Cb \to R\textup{-Mod}$, such that for all $x\in \Ob(\Cb)$, $M(x)$ is free over $R$ with finite rank and $\k\otimes_R M(x)\cong V(x)$. If $V$ is a direct sum of indecomposable persistence modules $V_I: \Cb\to  \k\textup{-Mod}$ with endomorphism ring isomorphic to $\k$, then $M$ is a direct sum of indecomposables  $M_I:\Cb\to R\textup{-Mod}$ with endomorphism ring isomorphic to $R$. 
\end{abstract}

\renewcommand{\labelenumi}{\textup{(\roman{enumi})}}
\renewcommand{\labelenumii}{\textup{(\roman{enumi}.\alph{enumii})}}
\renewcommand{\labelenumiii}{\textup{(\roman{enumi}.\alph{enumii}.\arabic{enumiii})}}

\numberwithin{equation}{section}

\section{Introduction}\label{into}

In this article, we assume that $\k$ is a field of arbitrary characteristic and denote by $\mathcal{A}_\k$ the category of local commutative Artinian $\k$-algebras with residue field isomorphic to $\k$. In particular, the morphisms in $\mathcal{A}_\k$ induce the identity on $\k$.  For all objects $R$ in $\mathcal{A}_\k$, we denote by $R$-Mod the category of $R$-modules. In particular, $\k$-Mod is just the category of $\k$-vector spaces. Let $\Cb$ be a small category and let $V: \Cb\to \k\textup{-Mod}$ be a {\it persistence module indexed by $\Cb$}, i.e., $V$ is a functor from $\Cb$ to $\k\textup{-Mod}$. Recall that $V$ is said to be {\it pointwise finite dimensional} if for all objects $x\in \Ob(\Cb)$, $V(x)$ has finite dimension over $\k$.  It follows from a result due to M. B. Botman and W. Crawley-Boevey (see \cite[Thm. 1.1]{botman1}) that if $V$ is pointwise finite dimensional then $V$ is a direct sum of indecomposable modules with local endomorphism ring. This result improved that due to Crawley-Boevey (see \cite[Thm. 1.1]{crawley-boevey}) which states that every pointwise persistence module over a totally ordered indexing set $(\mathscr{R}, <)$ (with $\mathscr{R}$ having a countable subset which is dense in the order topology on $\mathscr{R}$) is a direct sum of indecomposable persistence modules each having an endomorphism ring isomorphic to $\k$. They also prove in \cite[Thm. 1.3]{botman1} that every pointwise finite dimensional middle exact module over a product of two totally ordered sets (as defined in \cite[\S 2]{cochoy} decomposes into a direct sum of {\it block modules} (as defined in \cite[\S 2]{cochoy}), which provides an alternative proof to \cite[Thm. 2.1]{cochoy}. It is noted in the Proof Outline section in \cite{cochoy} that these block modules have endomorphism ring isomorphic to $\k$. These observations provide non-trivial examples of pointwise finite dimensional persistence modules that decompose as a direct sum of indecomposable persistence modules with endomorphism ring isomorphic to $\k$.  

Let $R$ be a fixed object in $\mathcal{A}_\k$. An {\it $R$-generalized persistence module indexed by $\Cb$} is just a functor $M:\Cb\to R\textup{-Mod}$. Let $V$ be a pointwise finite dimensional persistence module. We say that $(M, \phi)$ is a {\it lift} of $V$ over $R$ is a $R$-generalized persistence module $M$ such that for  all objects $x\in \Ob(\Cb)$, $M(x)$ is free of finite rank over $R$, together with a natural equivalence $\phi: (\k\otimes_R-)\circ M\to V$, where $\k\otimes_R-: R\textup{-Mod}\to \k\textup{-Mod}$ is the natural projection functor. We say that two lifts $(M,\phi)$ and $(M',\phi')$ of $V$ over $R$ are {\it isomorphic} if there is a natural equivalence $f: M\to M'$ such that the induced natural equivalence $\mathrm{id}_\k\otimes f: (\k\otimes_R-)\circ M\to (\k\otimes_R-)\circ M'$ satisfies that $\phi'\circ (\mathrm{id}_\k\otimes f) = \phi$. Our goal is to give a short proof of the following result.

\begin{theorem}\label{thm1}
Let $V$ a pointwise finite dimensional persistence module such that $V$ is the direct sum of indecomposable persistence modules with endomorphism ring isomorphic to $\k$. Then for all objects $R$ in $\mathcal{A}_\k$ and all lifts $(M,\phi)$ of $V$ over $R$, the $R$-generalized persistence module $M$ is a direct sum of indecomposables $R$-generalized persistence modules with endomorphism ring isomorphic to $R$. 
\end{theorem}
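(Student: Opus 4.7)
I begin by introducing the reduction map $\pi \colon \End(M) \to \End(V)$ induced by $\phi$: it sends a natural transformation $f \colon M \to M$ to the unique $\bar f \colon V \to V$ satisfying $\bar f \circ \phi = \phi \circ (\mathrm{id}_\k \otimes f)$. Since $M(x)$ is free of rank $d(x) := \dim_\k V(x)$ over $R$, the pointwise endomorphism ring is $\End_R(M(x)) \cong \Mat_{d(x)}(R)$, and an element $f \in \ker \pi$ has components $f(x) \in \m \cdot \End_R(M(x))$ for every $x$. Fixing $n$ with $\m^n = 0$ (possible since $R$ is Artinian), the commutativity of $R$ gives $(f_1 \circ \cdots \circ f_n)(x) \in \m^n \End_R(M(x)) = 0$ for any $n$-tuple drawn from $\ker \pi$, so $\ker \pi$ is a two-sided nilpotent ideal of index at most $n$.

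\textbf{Lifting the decomposition.} The hypothesis $V = \bigoplus_I V_I$ with $\End(V_I) \cong \k$ produces pairwise orthogonal primitive idempotents $\{p_I\} \subset \End(V)$ with $\sum_I p_I(x) = \mathrm{id}_{V(x)}$, a sum that is finite at each $x$. Using the nilpotence of $\ker \pi$ together with this pointwise finiteness, I lift the $p_I$ to a pairwise orthogonal family $\{e_I\} \subset \End(M)$ by the standard inductive idempotent-lifting procedure through a nilpotent ideal, carried out so that orthogonality and naturality are preserved. Setting $M_I(x) := e_I(x) M(x)$ then produces subfunctors with $M = \bigoplus_I M_I$; each $M_I(x)$ is a direct summand of the free $R$-module $M(x)$, hence itself free of rank $\dim_\k V_I(x)$, and $\phi$ induces $\k \otimes_R M_I \cong V_I$, so each $(M_I, \phi|_{V_I})$ is a lift of $V_I$ over $R$.

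\textbf{Identifying $\End(M_I) \cong R$.} The canonical map $R \to \End(M_I)$, $r \mapsto r \cdot \mathrm{id}_{M_I}$, is injective whenever $M_I$ is nonzero. I prove surjectivity by induction on the least $n$ with $\m^n = 0$; the case $n = 1$ is trivial since $R = \k$. For the inductive step, set $R' := R/\m^{n-1}$ and $M_I' := M_I \otimes_R R'$; by the inductive hypothesis applied to the lift $M_I'$ over $R'$, one has $\End(M_I') \cong R'$. Given $f \in \End(M_I)$, pick $r \in R$ whose image in $R'$ matches that of $f$. Then $g := f - r \cdot \mathrm{id}_{M_I}$ has components $g(x) \in \m^{n-1} \cdot \End_R(M_I(x))$; since $\m \cdot \m^{n-1} = 0$, the map $g(x)$ kills $\m M_I(x)$ and lands in $\m^{n-1} M_I(x)$, and the freeness of $M_I(x)$ gives $\m^{n-1} M_I(x) \cong V_I(x) \otimes_\k \m^{n-1}$. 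Thus $g$ descends to a natural transformation $V_I \to V_I \otimes_\k \m^{n-1}$, and because $\m^{n-1}$ is a finite-dimensional $\k$-vector space, the space of such natural transformations is $\End(V_I) \otimes_\k \m^{n-1} \cong \m^{n-1}$. Hence $g = s \cdot \mathrm{id}_{M_I}$ for some $s \in \m^{n-1}$, so $f = (r+s) \cdot \mathrm{id}_{M_I}$. Since $\End(M_I) \cong R$ is local, each $M_I$ is indecomposable, completing the proof.

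\textbf{Main obstacle.} I expect the delicate step to be the simultaneous lift of the potentially infinite orthogonal family $\{p_I\}$ to a \emph{natural} family $\{e_I\}$ on the functor $M$: the pointwise problem at each $x$ reduces to lifting finitely many orthogonal idempotents through the nilpotent ideal $\m \cdot \End_R(M(x))$ of the semiperfect algebra $\Mat_{d(x)}(R)$, which is classical, but coordinating these pointwise lifts into a genuine natural transformation compatible with every morphism of $\Cb$ is the heart of the argument. A secondary subtlety is the identification $\Hom(V_I, V_I \otimes_\k \m^{n-1}) \cong \End(V_I) \otimes_\k \m^{n-1}$ used in Paragraph~3, which relies on the freeness of $M_I(x)$ over $R$ and the finite-dimensionality of $\m^{n-1}$.
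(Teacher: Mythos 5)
Your proof takes a genuinely different route from the paper's. The paper lifts the isomorphism $\overline{\varphi}\colon V\to\bigoplus_I V_I$ pointwise (using freeness of each $M(x)$), then argues by a Nakayama-type step that the resulting $\varphi$ is automatically a natural transformation, and finally invokes its Lemma~\ref{lemma1.2} (proved by induction along small extensions, using Lemma~\ref{lemma2.1}) to get $\mathbf{End}^\Cb_R(M_I)=R$. You instead pass to the endomorphism ring $\mathbf{End}^\Cb_R(M)$, observe that the kernel of the reduction map $\pi$ is nilpotent, and attempt to lift the primitive orthogonal idempotents $\{p_I\}\subset\mathbf{End}^\Cb_\k(V)$ to orthogonal idempotents $\{e_I\}\subset\mathbf{End}^\Cb_R(M)$. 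Your Paragraph~3 is a perfectly good variant of the paper's Lemma~\ref{lemma1.2}: you induct on the nilpotency index of $\m_R$ rather than on $\ell(R)$ via small extensions, and correctly supply the extra identification $\mathbf{Hom}^\Cb_\k(V_I,V_I\otimes_\k\m^{n-1})\cong\mathbf{End}^\Cb_\k(V_I)\otimes_\k\m^{n-1}$ that this coarser induction requires.

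However, Paragraph~2 has a genuine gap, and you have half-identified it yourself. Lifting idempotents through a nilpotent ideal $\ker\pi$ is only a theorem about the surjection $\mathbf{End}^\Cb_R(M)\twoheadrightarrow\mathbf{End}^\Cb_R(M)/\ker\pi$. To lift $p_I$ you first need to know that $p_I$ lies in the \emph{image} of $\pi$, i.e.\ that $\pi\colon\mathbf{End}^\Cb_R(M)\to\mathbf{End}^\Cb_\k(V)$ is surjective (at least onto the subring generated by the $p_I$). You nowhere establish this, and it is exactly the hard point: a pointwise lift $f_x$ of $\overline f_x$ exists because $M(x)$ is free, but the failure of naturality of the family $(f_x)_x$ is an obstruction class, not automatically zero. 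Your ``Main obstacle'' paragraph frames this as a pointwise-to-global coordination problem; the sharper framing is that it is precisely the surjectivity of $\pi$, and once you have that, working inside the ring $\mathbf{End}^\Cb_R(M)$ makes naturality of the $e_I$ automatic (no coordination required). In addition, even granting surjectivity, you would still need to address the infinite family: the classical idempotent-lifting argument handles finitely many orthogonal idempotents at a time, and you must explain why the resulting $\{e_I\}$ satisfies $e_I(x)=0$ whenever $p_I(x)=0$ and $\sum_I e_I(x)=\mathrm{id}_{M(x)}$ at each $x$. So as written, the decomposition $M=\bigoplus_I M_I$ is asserted but not proved, and the proof does not go through without supplying the surjectivity of $\pi$ (or an equivalent statement) and the bookkeeping for an infinite index set.
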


\begin{remark}
Observe that if $V$ is a pointwise finite dimensional persistence module, then $V$ has always a {\it trivial lift} which is defined by the composition of  $V$ with the functor $(R\otimes_\k -): \k\textup{-Mod}\to R\textup{-Mod}$.  However, if for example we let $\Cb$ be the cycle quiver with relations $(Q,\rho)$, where   
\begin{align*}
Q: &\xymatrix{\underset{1}{\bullet}\ar@(ul,dl)_{\gamma}}\text{ and } \rho=\{\gamma^2\},
\end{align*}
then the simple persistence module $S_1$ corresponding to the unique vertex of $Q$ fits in a short exact sequence of persistence modules 
\begin{equation*}
0\to S_1\xrightarrow{\iota} P_1\xrightarrow{\pi}S_1\to 0,
\end{equation*} 
where $P_1$ is the projective cover of $S_1$. It follows that $P_1$ defines a non-trivial lift of $S_1$ over the ring of dual numbers $\k[\epsilon]$, with $\epsilon^2=0$, by letting $\epsilon$ act on $P_1$ as $\iota \circ \pi$.  
\end{remark}

It is important to mention that generalized persistence modules over symmetric monoidal categories, which include categories of vector spaces, have been studied by A. Patel in \cite{patel}.  

\section{Proof of Theorem \ref{thm1}}
Let $R$ be a fixed object in $\mathcal{A}_\k$. We denote by $\textbf{Pers}^{\Cb}_R$ the abelian category if $R$-generalized persistence modules indexed by $\Cb$. Note that in particular $\textbf{Pers}^{\Cb}_\k$ is just the the category of persistence modules $V: \Cb\to \k\textup{-Mod}$. If $\Cb(x,y)$ is a morphism between objects in $\Cb$, then for all $R$-generalized persistence modules $M$ in $\textbf{Pers}^{\Cb}_R$, we let $M(x,y) = M(\Cb(x,y))$. Note that every object $V$ in  $\textbf{Pers}^{\Cb}_\k$ can be viewed as an object in $\textbf{Pers}^{\Cb}_R$ by considering $\k$ as an $R$-module. For all objects $M$ and $N$ in $\textbf{Pers}^{\Cb}_R$, we denote by $\textbf{Hom}^\Cb_R(M,N)$ the abelian group of morphisms (i.e. natural transformations) $f: M\to N$. Note that in particular that $\textbf{Hom}^\Cb_R(M,N)$ is an $R$-module. We also let $\textbf{End}^\Cb_R(M)= \textbf{Hom}^\Cb_R(M,M)$ and denote by $\k\otimes_RM$ the composition of functors $(\k\otimes_R -)\circ M$. For all lifts $(M,\phi)$ of $V$ over $R$, we denote by $\pi_M: M\to V$ the natural projection, i.e., for all $x\in \Cb$, $(\pi_M)_x: M(x)\to V(x)$  is the natural projection between $R$-modules. 

\begin{lemma}\label{lemma2.1}

Let $V$ be a pointwise finite dimensional persistence module and let $(M,\phi)$ be a lift of $V$ over $R$. Then there is an isomorphism of $R$-modules $\mathbf{Hom}^\Cb_R(M,V)\cong \mathbf{Hom}^\Cb_\k(V,V)$. 
\end{lemma}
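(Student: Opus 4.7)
The plan is to construct mutually inverse $R$-module maps
\[\Psi:\mathbf{Hom}^\Cb_\k(V,V)\longrightarrow \mathbf{Hom}^\Cb_R(M,V)\quad\text{and}\quad \Phi:\mathbf{Hom}^\Cb_R(M,V)\longrightarrow \mathbf{Hom}^\Cb_\k(V,V),\]
whose existence rests on the basic observation that each $V(x)$ is annihilated by the maximal ideal $\m$ of $R$. Indeed, the $R$-module structure on the $\k$-vector space $V(x)$ is induced by the residue map $R\twoheadrightarrow \k$, and the lifting datum $\phi_x:\k\otimes_R M(x)\xrightarrow{\sim}V(x)$ identifies the pointwise projection $(\pi_M)_x:M(x)\to V(x)$ with the canonical surjection $M(x)\twoheadrightarrow M(x)/\m M(x)$, whose kernel is $\m M(x)$. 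In particular $\pi_M:M\to V$ is a natural transformation that is surjective at every object.

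First I would define $\Psi$ by $\Psi(g)_x=g_x\circ(\pi_M)_x$. Naturality of $\Psi(g)$ is immediate from naturality of $g$ and of $\pi_M$, and each $\Psi(g)_x$ is $R$-linear because $(\pi_M)_x$ is $R$-linear and $g_x$ is $\k$-linear once $V(x)$ is regarded as an $R$-module via $R\twoheadrightarrow \k$. Conversely, for $f\in\mathbf{Hom}^\Cb_R(M,V)$, the fact that $\m\cdot V(x)=0$ forces $f_x$ to vanish on $\m M(x)$, so there is a unique $\k$-linear map $\tilde f_x:V(x)\to V(x)$ with $f_x=\tilde f_x\circ(\pi_M)_x$. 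Setting $\Phi(f)_x=\tilde f_x$ gives the candidate inverse.

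The one step that requires a short verification, and hence is the main (modest) obstacle, is checking that $\Phi(f)=(\tilde f_x)_x$ is natural. For $\alpha\in\Cb(x,y)$, naturality of $f$ and of $\pi_M$ combine to give
\[\tilde f_y\circ V(\alpha)\circ(\pi_M)_x \;=\; V(\alpha)\circ \tilde f_x\circ(\pi_M)_x,\]
and the pointwise surjectivity of $\pi_M$ then cancels $(\pi_M)_x$ to produce the required identity $\tilde f_y\circ V(\alpha)=V(\alpha)\circ\tilde f_x$. Mutual inversion of $\Phi$ and $\Psi$, and their $R$-linearity, follow directly from the constructions, where $\mathbf{Hom}^\Cb_\k(V,V)$ is endowed with the $R$-action pulled back along $R\twoheadrightarrow\k$.
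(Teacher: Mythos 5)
Your proof is correct and follows essentially the same route as the paper: both construct the explicit mutually inverse maps given by precomposition with $\pi_M$ in one direction and, in the other, by factoring an $R$-linear map $M(x)\to V(x)$ through $M(x)/\m M(x)\cong V(x)$. If anything, your write-up is cleaner, since it isolates the key point that $\m$ annihilates each $V(x)$ (so $f_x$ vanishes on $\ker(\pi_M)_x=\m M(x)$) and checks naturality of the factored map using pointwise surjectivity of $\pi_M$, whereas the paper's appeal to freeness of $M(x)$ when defining the forward map is unnecessary.
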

\begin{proof}
Let $f\in \mathbf{Hom}^\Cb_\k(V,V)$ and let $x\in\Ob(\Cb)$ be fixed but arbitrary. Since $M(x)$ is free over $R$, it follows that there exists $g_x\in \Hom_R(M(x),V(x))$ such that $g_x = f_x\circ (\pi_M)_x$. It is straightforward to prove that for all morphisms $\Cb(x,y)$ in $\Cb$, $g_y\circ M(x,y)=V(x,y)\circ g_x$, and thus we have a morphism $g\in \mathbf{Hom}^\Cb_R(M,V)$. In this way, we obtain a morphism between $R$-modules $\Phi: \mathbf{Hom}^\Cb_\k(V,V)\to \mathbf{Hom}^\Cb_R(M,V)$, defined by $\Phi(f) = g$. Next let $g\in  \mathbf{Hom}^\Cb_R(M,V)$. For all $x\in \Ob(\Cb)$, let $f_x: V(x)\to V(x)$ be defined as $f_x(v) = g_x(v)$ for all $v\in V(x)$. Thus we obtain a morphism $f\in  \mathbf{Hom}^\Cb_\k(V,V)$, and in this way we obtain a morphism between $R$-modules $\Psi:  \mathbf{Hom}^\Cb_R(M,V)\to  \mathbf{Hom}^\Cb_\k(V,V)$ defined as $\Psi(g)=f$. It is straightforward to prove that $\Phi$ and $\Psi$ are inverses of each other and thus  $\mathbf{Hom}^\Cb_R(M,V)\cong \mathbf{Hom}^\Cb_\k(V,V)$.

\end{proof}

\begin{lemma}\label{lemma1.2}
Assume that $V: \Cb\to \k\textup{-Mod}$ is a pointwise finite dimensional persistence module such that $\mathbf{End}^\Cb_\k(V) =\k$. Then for all objects $R$ in $\mathcal{A}_\k$ and all lifts $(M, \phi)$ of $V$ over $R$, $\mathbf{End}^\Cb_R(M)= R$. In particular, $M$ is an indecomposable $R$-generalized persistence module.    
\end{lemma}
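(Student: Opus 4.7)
The plan is to prove that the natural $R$-algebra homomorphism $\rho: R \to \textbf{End}^\Cb_R(M)$ sending $r$ to $r \cdot \mathrm{id}_M$ is an isomorphism. Injectivity is immediate when $V \neq 0$: for any $x \in \Ob(\Cb)$ with $M(x) \neq 0$, the freeness of $M(x)$ over $R$ forces $r \cdot \mathrm{id}_{M(x)} \neq 0$ whenever $r \neq 0$; the case $V=0$ is trivial. So the substantive content is the surjectivity of $\rho$, which I would establish by induction on the length of $R$.

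For the base case $R = \k$, we have $\textbf{End}^\Cb_R(M) \cong \textbf{End}^\Cb_\k(V) = \k = R$ by hypothesis. For the inductive step, I would pick a nonzero ideal $I$ inside the socle of $R$; since $R$ is local with residue field $\k$, this forces $I \cong \k$ and $\mathfrak{m}I = 0$. Setting $R' = R/I$ and $M' = R' \otimes_R M$, one readily checks that $M'$ is a lift of $V$ over $R'$: freeness descends to $R'$, and $\k \otimes_{R'} M' \cong \k \otimes_R M \cong V$. Since $R'$ has strictly smaller length than $R$, the inductive hypothesis yields $\textbf{End}^\Cb_{R'}(M') = R'$.

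Given $f \in \textbf{End}^\Cb_R(M)$, the induced endomorphism $\bar{f}: M' \to M'$ equals $\bar{r} \cdot \mathrm{id}_{M'}$ for some $\bar{r} \in R'$. Lifting $\bar{r}$ to $r \in R$, the difference $g := f - \rho(r)$ vanishes modulo $I$, hence takes values in $IM$. Choosing a generator $\xi$ of $I$, multiplication by $\xi$ defines a morphism $M \to IM$; because each $M(x)$ is free over $R$ and $\mathrm{Ann}_R(\xi) = \mathfrak{m}$, its kernel is $\mathfrak{m}M$ componentwise, so $IM \cong M/\mathfrak{m}M \cong V$ as persistence modules (with $R$ acting on $IM$ through $\k$). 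Combining this identification with Lemma~\ref{lemma2.1} yields $\textbf{Hom}^\Cb_R(M, IM) \cong \textbf{Hom}^\Cb_R(M, V) \cong \textbf{End}^\Cb_\k(V) = \k$, a one-dimensional $\k$-space generated by the morphism $\xi \cdot \mathrm{id}_M$. Hence $g = \lambda \xi \cdot \mathrm{id}_M$ for some $\lambda \in \k$, and $f = \rho(r + \lambda \xi)$ lies in the image of $\rho$, closing the induction.

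The main obstacle I anticipate is verifying that, under the $\k$-linear isomorphism $\textbf{Hom}^\Cb_R(M, IM) \cong \k$ inherited from Lemma~\ref{lemma2.1}, the map $\xi \cdot \mathrm{id}_M$ is in fact a nonzero generator; this is the step where the freeness hypothesis on each $M(x)$ does genuine work, as it is needed both to identify $IM$ with $V$ via $\xi$ and to ensure that the ``new'' degree of freedom introduced at the top of the filtration of $R$ is already realized inside $\rho(R)$. Once surjectivity of $\rho$ is in hand, the indecomposability of $M$ is immediate, since the local ring $R$ has only the trivial idempotents $0$ and $1$.
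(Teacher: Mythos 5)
Your argument is correct and follows essentially the same route as the paper: induct on the length of $R$ by passing to a small extension $R \to R/I$ where $I$ is a principal ideal in the socle, apply the inductive hypothesis to the reduced endomorphism, and then use Lemma~\ref{lemma2.1} together with the freeness-driven identification $IM \cong V$ to recognize the difference as multiplication by a scalar. One minor slip: ``a nonzero ideal $I$ inside the socle'' is not automatically isomorphic to $\k$ (the socle may have $\k$-dimension greater than one); you should take $I$ to be a nonzero \emph{principal} (equivalently, minimal) ideal in the socle, which is what your later choice of a single generator $\xi$ tacitly assumes.
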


In order to prove Lemma \ref{lemma1.2}, we need to recall the following definition from \cite[Def. 1.2]{sch}.

\begin{definition}\label{defi3.7}
Let $\theta:R\to R_0$ be a morphism of Artinian objects in $\mathcal{A}_\k$. We say that $\theta$ is a {\it small extension} if the kernel of $\theta$ is a non-zero principal ideal $tR$ that is annihilated by the unique maximal ideal $\m_R$ of $R$.
\end{definition}

\begin{remark}
\begin{enumerate}
\item Let $\theta: R\to R_0$ be a small extension in $\mathcal{A}_\k$ with $tR = \ker \theta$. Then it follows that $tR\cong \k$ as $R$-modules. 
\item Let $R$ be a fixed object in $\mathcal{A}_\k$ with length $\ell(R)>0$. Then there is a small extension $\theta: R\to R_0$. Indeed, let $\m_R$ the unique maximal ideal of $R$. Since $R$ is Artinian, there exists  $p\geq 1$ such that $\m_R^p=0$ and $\m_R^{p-1}\not=0$. Let $t\in \m_R^{p-1}$ with $t\not=0$. By letting $R_0 = R/tR$, we obtain that $R_0$ is in $\mathcal{A}_\k$ and the natural projection $\theta:R\to R_0$ is a small extension. Therefore, we can use small extensions to prove results on the the objects $R$ in $\mathcal{A}_\k$ by using induction on the length  of $R$. 
\end{enumerate}
\end{remark}

\begin{proof}[Proof of Lemma \ref{lemma1.2}]
In the following, we adapt the arguments in the proof of \cite[Lemma 2.3]{bleher8} to our situation. 
Let $\theta: R\to R_0$ be a small extension in $\mathcal{A}_\k$ with $\ker \theta = tR$ and assume that for all lifts $(M_0, \phi_0)$ of $V$ over $R_0$, we have $\mathbf{End}^\Cb_{R_0}(M_0)=R_0$. Let $f\in \mathbf{End}^\Cb_R(M)$, and consider $f_0 = \mathrm{id}_{R_0}\otimes_{R,\theta}f\in \mathbf{End}^\Cb_{R_0}(R_0\otimes_{R,\theta}M)$. Since $R_0\otimes_{R,\theta}M$ defines a lift of $V$ over $R_0$, it follows by induction that there exists $r_0\in R_0$ such that $f_0 = \mu_{r_0}$, where $\mu_{r_0}$ denotes multiplication by $r_0$. Let $r\in R$ such that $\theta(r)=r_0$ and consider $g = f-\mu_r\in \mathbf{End}^\Cb_R(M)$. Then $\mathrm{id}_{R_0}\otimes_{R,\theta}g=0$. This implies that $g \in \mathbf{Hom}^\Cb_R(M,tR\otimes_RM)$. Since $tR\cong \k$, it follows that $tR\otimes_RM\cong \k\otimes_R M\cong V$. By using Lemma \ref{lemma2.1}, we have that $\mathbf{Hom}^\Cb_R(M,tR\otimes_RM)\cong \mathbf{End}^\Cb_\k(V)$. Since $\mathbf{End}^\Cb_\k(V)=\k$, there exists $\lambda \in R$ such that $g= \mu_{t\lambda}$. This implies that $f = \mu_{r+t\lambda}$ and thus every morphism in $\mathbf{End}^\Cb_R(M)$ is multiplication by an scalar in $R$ and thus  $\mathbf{End}^\Cb_R(M) = R$.
\end{proof}

\begin{proof}[Proof of Theorem \ref{thm1}]
Assume that there is an isomorphism $\overline{\varphi}: V \to \bigoplus_{I\in \mathscr{I}}V_I$, with $\mathbf{End}^\Cb_\k(V_I)=\k$ for all $I\in \mathscr{I}$. Let $(M, \phi)$ be a lift of $V$ over $R$, and for all  $I\in \mathscr{I}$ let $(M_I, \phi_I)$ be a lift of $V_I$ over $R$.  For each $x\in \Ob(\Cb)$, there is a morphism $\overline{\varphi}_x: V(x)\to \bigoplus_{I\in \mathscr{I}}V_I(x)$, where $V_I(x) \not=0$ for at most finitely many $I\in \mathscr{I}$. Since $M(x)$ is free over $R$, there exists a morphism of $R$-modules $\varphi_x: M(x)\to \bigoplus_{I\in \mathscr{I}}M_I(x)$ such that $\overline{\varphi}_x\circ (\pi_M)_x = \left(\bigoplus_{I\in \mathscr{I}}\pi_I(x)\right)\circ \varphi_x$. Let $\Cb(x,y)$ be a morphism in $\Cb$, and let $g_x = \varphi_y\circ M(x,y) - \left(\bigoplus_{I\in \mathscr{I}}M_I(x,y)\right)\circ \varphi_x$. Then $\mathrm{id}_\k\otimes_R g_x =\overline{\varphi}_y\circ V(x,y) - \left(\bigoplus_{I\in \mathscr{I}}V_I(x,y)\right)\circ \overline{\varphi}_x =0$. This implies that $\mathrm{im}\, g_x = \m_R\mathrm{im}\,g_x$. By Nakayama's Lemma, we have $\mathrm{im}\, g_x=0$ or $g_x =0$. Thus we obtain a morphism between $R$-generalized persistence modules $\varphi: M\to  \bigoplus_{I\in \mathscr{I}}M_I$. Moroever, since for all $x\in \Ob(\Cb)$, $\overline{\varphi}(x)$ is an isomorphism, it follows again by Nakayama's Lemma that $\varphi(x)$ is also an isomorphism. This implies that $\varphi$ is then an isomorphism between $R$-generalized persistence modules.  Moreover, by Lemma \ref{lemma1.2}, for each $I\in \mathscr{I}$, $\mathbf{End}^\Cb_R(M)=R$. This finishes the proof of Theorem \ref{thm1}. 
\end{proof}

\bibliographystyle{amsplain}
\bibliography{LiftingsPersistence}   

\providecommand{\bysame}{\leavevmode\hbox to3em{\hrulefill}\thinspace}
\providecommand{\MR}{\relax\ifhmode\unskip\space\fi MR }
\providecommand{\MRhref}[2]{%
  \href{http://www.ams.org/mathscinet-getitem?mr=#1}{#2}
}
\providecommand{\href}[2]{#2}
\begin{thebibliography}{1}

\bibitem{bleher8}
F.~M. Bleher and T.~Chinburg, \emph{Universal deformation rings and cyclic
  blocks}, Math. Ann. \textbf{318} (2000), 805--836.

\bibitem{botman1}
M.~B. Botnan and W.~Crawley-Boevey, \emph{Decomposition of persistence
  modules}, Proc. Amer. Math. Soc. \textbf{148} (2020), no.~11, 4581--4596.
  \MR{4143378}

\bibitem{cochoy}
J.~Cochoy and S.~Oudot, \emph{Decomposition of exact pfd persistence
  bimodules}, Discrete Comput. Geom. \textbf{63} (2020), no.~2, 255--293.
  \MR{4057439}

\bibitem{crawley-boevey}
W.~Crawley-Boevey, \emph{Decomposition of pointwise finite-dimensional
  persistence modules}, J. Algebra Appl. \textbf{14} (2015), no.~5, 1550066, 8.
  \MR{3323327}

\bibitem{patel}
A.~Patel, \emph{Generalized persistence diagrams}, J. Appl. Comput. Topol.
  \textbf{1} (2018), no.~3-4, 397--419. \MR{3975559}

\bibitem{sch}
M.~Schlessinger, \emph{Functors of {A}rtin rings}, Trans. Amer. Math. Soc.
  \textbf{130} (1968), 208--222.

\end{thebibliography}

\end{document}